\begin{document}

% \mmbox enables macros to survive outside of $ ... $
\newcommand{\mmbox}[1]{\mbox{${#1}$}}
\newcommand{\proj}[1]{\mmbox{{\mathbb P}^{#1}}}
\newcommand{\Cr}{C^r(\Delta)}
\newcommand{\CR}{C^r(\hat\Delta)}
\newcommand{\affine}[1]{\mmbox{{\mathbb A}^{#1}}}
\newcommand{\Ann}[1]{\mmbox{{\rm Ann}({#1})}}
\newcommand{\caps}[3]{\mmbox{{#1}_{#2} \cap \ldots \cap {#1}_{#3}}}
\newcommand{\N}{{\mathbb N}}
\newcommand{\Z}{{\mathbb Z}}
\newcommand{\R}{{\mathbb R}}
\newcommand{\sat}{{\rm sat}}
\newcommand{\Tor}{\mathop{\rm Tor}\nolimits}
\newcommand{\Ext}{\mathop{\rm Ext}\nolimits}
\newcommand{\Hom}{\mathop{\rm Hom}\nolimits}
\newcommand{\im}{\mathop{\rm Im}\nolimits}
\newcommand{\rank}{\mathop{\rm rank}\nolimits}
\newcommand{\supp}{\mathop{\rm supp}\nolimits}
\newcommand{\arrow}[1]{\stackrel{#1}{\longrightarrow}}
\newcommand{\CB}{Cayley-Bacharach}
\newcommand{\coker}{\mathop{\rm coker}\nolimits}
\sloppy
\newtheorem{defn0}{Definition}[section]
\newtheorem{prop0}[defn0]{Proposition}
\newtheorem{quest0}[defn0]{Question}
\newtheorem{thm0}[defn0]{Theorem}
\newtheorem{lem0}[defn0]{Lemma}
\newtheorem{corollary0}[defn0]{Corollary}
\newtheorem{example0}[defn0]{Example}
\newtheorem{remark0}[defn0]{Remark}

\newenvironment{defn}{\begin{defn0}}{\end{defn0}}
\newenvironment{prop}{\begin{prop0}}{\end{prop0}}
\newenvironment{quest}{\begin{quest0}}{\end{quest0}}
\newenvironment{thm}{\begin{thm0}}{\end{thm0}}
\newenvironment{lem}{\begin{lem0}}{\end{lem0}}
\newenvironment{cor}{\begin{corollary0}}{\end{corollary0}}
\newenvironment{exm}{\begin{example0}\rm}{\end{example0}}
\newenvironment{rem}{\begin{remark0}\rm}{\end{remark0}}

\newcommand{\defref}[1]{Definition~\ref{#1}}
\newcommand{\propref}[1]{Proposition~\ref{#1}}
\newcommand{\thmref}[1]{Theorem~\ref{#1}}
\newcommand{\lemref}[1]{Lemma~\ref{#1}}
\newcommand{\corref}[1]{Corollary~\ref{#1}}
\newcommand{\exref}[1]{Example~\ref{#1}}
\newcommand{\secref}[1]{Section~\ref{#1}}
\newcommand{\remref}[1]{Remark~\ref{#1}}
\newcommand{\questref}[1]{Question~\ref{#1}}

\newcommand{\std}{Gr\"{o}bner}
\newcommand{\jq}{J_{Q}}

%\parskip = 4pt

%\begin{singlespace}
\title{Equivalent matrices up to permutations}

\author{\c{S}tefan O. Toh\v{a}neanu and Jesus Vargas}

\subjclass[2010]{Primary: 05A05; Secondary: 94A60, 94B05, 11T71} \keywords{permutation matrices, McEliece cryptosystem, Reed-Solomon codes, codewords of minimum weight.\\ \indent Authors' addresses: Department of Mathematics, University of Idaho, Moscow, ID 83844, tohaneanu@uidaho.edu, varg1778@vandals.uidaho.edu.}

\begin{abstract}
\noindent Given two $k\times n$ matrices $A$ and $B$, we describe a couple of methods to solve the matrix equation $XA=BY$, where $X$ is an invertible $k\times k$ matrix, and $Y$ is an $n\times n$ permutation matrix, both of which we want to determine. We are interested in pursuing those techniques that have algebraic geometric flavour. An application to solving such a matrix equation comes from the cryptanalysis of McEliece cryptosystem. By using codewords of minimum weight of a linear code, in concordance with these methods of solving $XA=BY$, we present an efficient way to determine the entire encryption keys for the McEliece cryptosystems built on Reed-Solomon codes.
\end{abstract}
\maketitle

\section{Introduction}\label{Introduction}

A {\em permutation} on the set $[n]:=\{1,\ldots,n\}$, is a bijective function $\sigma:[n]\rightarrow[n]$. The set of all permutations on the set $[n]$ forms a group, denoted here with $\mathbb S_n$, with multiplication being defined by the composition of functions. The identity element will be denoted here with $e$. For more information we suggest \cite[Chapter 5]{Ga}.

The most classical notation for a permutation $\sigma\in\mathbb S_n$ is $\sigma=\left(\begin{array}{cccc}1&2&\cdots&n\\ \sigma(1)&\sigma(2)&\cdots&\sigma(n)\end{array}\right)$. A {\em cycle} $(\,i_1i_2\cdots i_{m-1}i_m\,)$ is the permutation $\sigma$ defined as $\sigma(j)=j$ for all $j\in[n]\setminus\{i_1,\ldots,i_m\}$, and $\sigma(i_1)=i_2,\ldots,\sigma(i_{m-1})=i_m, \sigma(i_m)=i_1$. Every permutation is a product of disjoint cycles, and every two disjoint cycles commute. The inverse of the cycle $\sigma=(\,i_1i_2\cdots i_{m-1}i_m\,)$, is the cycle $\sigma^{-1}=(\,i_mi_{m-1}\cdots i_2i_1\,)$.

An $n\times n$ matrix $P$ is called a {\em permutation matrix} if there is $\sigma\in\mathbb S_n$ such that $P$ is obtained by permuting according to $\sigma$ the columns of the identity matrix $I_n$; if $\sigma(i)=j$, then the $i-$th column of $I_n$ is moved to become the $j-$ column of $P$. To specify the permutation $\sigma$, we denote here $P=I_n(\sigma)$. We have
$$(I_n(\sigma))^{-1}=I_n(\sigma^{-1})=(I_n(\sigma))^T.$$

\medskip

Let $\mathbb K$ be any field. Let $A$ and $B$ be two given $k\times n$ matrices with entries in $\mathbb K$. We say that $A$ and $B$ are {\em equivalent up to a permutation} if an only if there exist an invertible $k\times k$ matrix $S$ with entries in $\mathbb K$, and an $n\times n$ permutation matrix $P$ such that $$A=SBP.$$ Equivalently, the matrix equation $$XA=BY$$ has a solution (i.e., $X=S^{-1}$ and $Y=P$).

There are several instances where pairs of equivalent matrices up to a permutation occur:

\begin{itemize}
  \item Permutation matrices show up often in various matrix decompositions, especially in LU decomposition. For example, \cite[Theorem 1]{Jef} says that any $k\times n$ matrix $A$ of rank $r$ can be decomposed as $A=QBP$, where $B$ is a product of a $k\times r$ lower-triangular matrix and an $r\times n$ upper-triangular matrix, and both $Q$ and $P$ are permutation matrices of appropriate sizes.
  \item Suppose $G$ is the generating matrix (of size $k\times n$) of a linear code of dimension $k$. Using Gaussian elimination, we can bring $G$ to the ``standard form'', which is a matrix of the form $[I_k|A]$, where $I_k$ is the identity $k\times k$ matrix, and $A$ is a $k\times (n-k)$ matrix. While the Gaussian elimination process is captured in a $k\times k$ invertible matrix $S_G$, there exists an $n\times n$ permutation matrix $P_G$, such that $$S_GGP_G=[I_k|A];$$ just permute the pivot columns accordingly. The $(n-k)\times n$ matrix $H:=[-A^T|I_{n-k}]$ is called {\em the parity-check matrix} of the linear code. If we are to be picky, this is true only if $P_G$ is the identity matrix $I_n$. In general case, the true parity-check matrix of the code is $HP_G^T$.
  \item The public key of the McEliece cryptosystem is a matrix $A$ that it is known to be equal to $SBP$, where $S$ is invertible, $P$ is a permutation matrix, and $B$ is a convenient matrix of the same size as $A$, all these three matrices being a part of the encryption key. Though it would defeat the security purpose of this cryptosystem, if $B$ is also known, it is an interesting exercise to find the remaining two matrices $S$ and $P$. As an application to our methods of solving the matrix equation $XA=BY$, we will assume that $B$ is the generating matrix of a Reed-Solomon code, and that this matrix is also known to us.
  \item If $A$ is equivalent to $B$ up to a permutation, i.e., $A=SBP$, then $A^T=P^TB^TS^T$, and so $$AA^T=(SB)(SB)^T.$$ This factorization resembles a lot to the Cholesky decomposition of the symmetric matrix $AA^T$.
\end{itemize}

The structure of the paper is the following. First in Section \ref{permutations_vectors} we briefly discuss about two more or less standard methods to detect and determine permutations in vectors, and we focus on a third method that it is more complete than the previous two, and it has an algebraic flavour to it. Next, in Section \ref{permutations_matrices} we extend this preferred method to detecting and determining permutations in matrices. Along the way, for our personal preference, we end up computing some affine varieties. In Section \ref{attacks} we focus our attention on some cryptanalysis of the McEliece cryptosystems built on Reed-Solomon codes. It turns out that finding the corresponding affine varieties we mentioned above, is computationally very expensive. Therefore we develop an attack that uses projective codewords of minimum weight of Maximum Distance Separable (MDS) codes (see Section \ref{McElieceReedSolomon}). 

Since lots of concepts are coming into play from various areas, we tried to be as self-contained as possible, but without overloading the notes with too much information.

\medskip

\noindent{\bf Acknowledgement} We thank Dr. Alex Suchar (from University of Idaho) for the discussions on the subject, especially on the sorting method to detect permutations in vectors (Section \ref{Method_2}).

\section{Solving matrix equations $XA=BY$}\label{matrix_eqn}

\subsection{Detecting permutations in vectors.} \label{permutations_vectors}

\medskip

The group $\mathbb S_n$ {\em acts} on the vector space $\mathbb K^n$ in the following way. If $\sigma\in\mathbb S_n$ and ${\bf v}=(v_1,v_2,\ldots,v_n)\in\mathbb K^n$, then $$\sigma * {\bf v}:=(v_{\sigma(1)},v_{\sigma(2)},\ldots,v_{\sigma(n)})={\bf v}\cdot I_n(\sigma^{-1})\in\mathbb K^n.$$ In order to have a well-defined action we must have $$\sigma * (\tau * {\bf v})=(\tau\sigma) * {\bf v}.$$

Given two vectors ${\bf v},{\bf w}\in\mathbb K^n$, our goals in this subsection is to find methods to decide if there exists $\sigma\in \mathbb S_n$ such that ${\bf w}=\sigma * {\bf v}$, and determine the permutation $\sigma$.

\subsubsection{Method 1: brute-force scanning.} \label{Method_1} The first natural method to answer our goals requires to scan each entry in ${\bf w}$ and compare it to all entries in ${\bf v}$. For this technique we have at most $\displaystyle \frac{n(n-1)}{2}$ comparisons, and also we have to create a vector where we record the permutation $\sigma$:
\begin{enumerate}
  \item we take the first entry of ${\bf w}$, namely $w_1$, and compare it to all entries of ${\bf v}$ until we find the first match $w_1=v_{i_1}$;
  \item then we take the second entry of ${\bf w}$, namely $w_2$, and compare it to all entries of ${\bf v}$ except for the $i_1-$th entry, until we find the first match $w_2=v_{i_2}$;
  \item and so forth.
\end{enumerate}
If we get matchings all the way through, then $\sigma$ is the permutation $\sigma(1)=i_1,\sigma(2)=i_2,\ldots$.

\begin{exm}\label{method1} Suppose $\mathbb K=\mathbb F_7$, the prime field with 7 elements, and suppose $${\bf v}=(6,1,3,1,0,0)\mbox{ and }{\bf w}=(0,3,6,1,1,0).$$
\begin{itemize}
  \item Scan $w_1=0$, and observe that the first match is with $v_5$. Record $\sigma(1)=5$.
  \item Scan $w_2=3$ (if needed skip 5th entry in ${\bf v}$), and observe that the first match is with $v_3$. Record $\sigma(2)=3$.
  \item Scan $w_3=6$ (if needed skip 3rd and 5th entries in ${\bf v}$), and observe that the first match is with $v_1$. Record $\sigma(3)=1$.
  \item Scan $w_4=1$ (if needed skip 1st, 3rd and 5th entries in ${\bf v}$), and observe that the first match is with $v_2$. Record $\sigma(4)=2$.
  \item Scan $w_5=1$ (if needed skip 2nd, 1st, 3rd and 5th entries in ${\bf v}$), and observe that the first match is with $v_4$. Record $\sigma(5)=4$.
  \item Scan $w_6=0$, and observe that it matches with the remaining entry from ${\bf v}$, not considered yet, $v_6$. Record $\sigma(6)=6$.
\end{itemize}
So we obtained the permutation $\sigma=\left(\begin{array}{cccccc}1&2&3&4&5&6\\5&3&1&2&4&6\end{array}\right)$. Indeed $\sigma * {\bf v}={\bf w}$.

There are $2!\cdot 2!$ such permutations, because ${\bf v}$ (and also ${\bf w}$) has two entries that each repeats twice. Above we determined just one of them. Is there a nice scanning algorithm that will determine all of these permutations?
\end{exm}

\subsubsection{Method 2: total-ordering of the field.} \label{Method_2} On any set we can \underline{choose} a total-ordering of its elements (meaning that any two elements can be compared, and declare which one is ``bigger'' than the other).

For a finite field $\mathbb K$ with primitive element $\alpha$, this choosing can be done in a more standard way: we make $0$ to be the ``smallest'' element, and any $x\in\mathbb K\setminus\{0\}$ has a unique representation $x=\alpha^{i_x}$, for some unique $i_x\in\{0,\ldots,|\mathbb K|-2\}$. Then for any $x,y\in\mathbb K\setminus\{0\}$, we say that $x$ is ``smaller'' than $y$ if and only if $i_x\leq i_y$.

More particularly, if $\mathbb K=\mathbb F_p=\{0,1,\ldots,p-1\}$, then instead of using the standard order above, we could use the more natural total-ordering $0<1<2<\cdots<p-1$.

Once we have such a total-ordering, for any vector ${\bf v}\in\mathbb K^n$ there exists a permutation $\sigma_{\bf v}\in\mathbb S_n$ such that the entries of $\sigma_{\bf v}*{\bf v}$ are in increasing order. Of course, ${\bf w}=\sigma * {\bf v}$ if and only if $\sigma_{\bf v}*{\bf v}=\sigma_{\bf w}*{\bf w}$. In this instance, we have $${\bf w}=\sigma_{\bf w}^{-1}*(\sigma_{\bf v}*{\bf v})=(\sigma_{\bf v}\sigma_{\bf w}^{-1})*{\bf v},$$ so the permutation $\sigma$ is just $\sigma_{\bf v}\sigma_{\bf w}^{-1}$.

\begin{exm}\label{metod2} Using the same vectors from Example \ref{method1}, we have
$$\underbrace{(15326)}_{\sigma_{\bf v}}*{\bf v}=(0,0,1,1,3,6),$$ and $$\underbrace{(26345)}_{\sigma_{\bf w}}*{\bf w}=(0,0,1,1,3,6).$$ Note that we wrote the permutations with their disjoint cycles decomposition.

Then $\sigma=(15326)(54362)=(15423)$, which, is exactly the same permutation we obtained in Example \ref{method1}. It is not quite by chance that this happened; the way we obtained the permutations $\sigma_{\bf v}$ and $\sigma_{\bf w}$ was by ``first match'' scanning we are doing in Section \ref{Method_1}.
\end{exm}

\subsubsection{Method 3: an algebraic approach.} \label{Method_3} Let ${\bf v}=(v_1,\ldots,v_n)\in \mathbb K^n$. Define the polynomial $$Q_{\bf v}(T)=(T-v_1)(T-v_2)\cdots(T-v_n)\in\mathbb K[T],$$ and denote with $q_i({\bf v}), i=0,\ldots,n$ the coefficient of $T^i$. By Vieta's formulas, with $q_n({\bf v})=1$, for $k=1,\ldots,n$, one has $$q_{n-k}({\bf v})=(-1)^k\sum_{1\leq i_1<\cdots<i_k\leq n}v_{i_1}v_{i_2}\cdots v_{i_k}.$$ Of course, if $wt({\bf v})=m$, then $q_{n-m-1}({\bf v})=\cdots=q_0({\bf v})=0$ and $q_{n-m}({\bf v})\neq 0$.

We have the following immediate lemma:
\begin{lem} \label{equivalences} Given two vectors ${\bf v}, {\bf w}\in\mathbb K^n$, then we have the following immediate equivalent statements:
\begin{enumerate}
  \item There exists $\sigma\in\mathbb S_n$ such that $\sigma *{\bf v}={\bf w}$.
  \item $Q_{\bf v}(T)=Q_{\bf w}(T)$.
  \item For all $i=0,\ldots,n$, one has $q_i({\bf v})=q_i({\bf w})$.
  \item For all $j=1,\ldots,n$, one has $Q_{\bf v}(w_j)=0$.
\end{enumerate}
\end{lem}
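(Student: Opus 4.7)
The plan is to prove the four-way equivalence by a cycle (1) $\Rightarrow$ (2) $\Rightarrow$ (3) $\Rightarrow$ (4) $\Rightarrow$ (1). Three of these implications are essentially unpackings of definitions and basic facts about polynomials in one variable over a field; only the last one, (4) $\Rightarrow$ (1), carries any real content.

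For (1) $\Rightarrow$ (2), I would use the bijectivity of $\sigma$ on $[n]$ to reindex the product defining $Q_{\bf w}$: since $w_j = v_{\sigma(j)}$, we have $Q_{\bf w}(T) = \prod_{j=1}^n (T-v_{\sigma(j)}) = \prod_{i=1}^n (T-v_i) = Q_{\bf v}(T)$. The equivalence (2) $\Leftrightarrow$ (3) is just the statement that two polynomials of the same degree in $\mathbb{K}[T]$ are equal if and only if all of their coefficients agree, which in one direction gives (2) $\Rightarrow$ (3). For (3) $\Rightarrow$ (4), I would route through (2): each $w_j$ is a root of $Q_{\bf w}$ by construction, hence also of $Q_{\bf v}$ once the two polynomials are identified.

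The main obstacle is (4) $\Rightarrow$ (1). The natural strategy is to observe that each hypothesis $Q_{\bf v}(w_j) = 0$ means $(T - w_j)$ divides $Q_{\bf v}(T)$ in $\mathbb{K}[T]$, and then to combine these divisibilities to conclude $Q_{\bf w}(T) = \prod_j(T-w_j) \mid Q_{\bf v}(T)$; since both polynomials are monic of degree $n$, equality $Q_{\bf v} = Q_{\bf w}$ follows, from which the multisets $\{v_1,\ldots,v_n\}$ and $\{w_1,\ldots,w_n\}$ are seen to coincide, and any bijection realising this identification gives the required $\sigma$. The delicate step is combining the individual divisibilities when some of the $w_j$ repeat: divisibility by each linear factor $(T - w_j)$ only yields divisibility by the product once one groups equal roots and tracks multiplicities, and the pointwise vanishing hypothesis alone does not guarantee that the multiplicity of a repeated value $w_j$ inside ${\bf w}$ matches its multiplicity as a root of $Q_{\bf v}$. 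To close this gap cleanly I would interpret (4) as the divisibility statement $Q_{\bf w} \mid Q_{\bf v}$ in $\mathbb{K}[T]$ (equivalently, each value appears in ${\bf w}$ with at most the multiplicity with which it is a root of $Q_{\bf v}$), and flag this reading explicitly in the write-up; the monic-and-equal-degrees argument then concludes (1) without friction.
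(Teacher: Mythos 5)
Your chain (1) $\Rightarrow$ (2) $\Rightarrow$ (3) $\Rightarrow$ (4) is correct, and it is exactly the level of argument the paper has in mind: the paper supplies no proof at all, presenting the lemma as ``immediate.'' The genuinely valuable part of your proposal is the diagnosis of (4) $\Rightarrow$ (1), and you are right that this implication fails as literally stated. Concretely, over any field take ${\bf v}=(0,1)$ and ${\bf w}=(0,0)$: then $Q_{\bf v}(T)=T(T-1)$ vanishes at every entry of ${\bf w}$, so (4) holds, yet ${\bf w}$ is not a permutation of ${\bf v}$. The pointwise condition $Q_{\bf v}(w_j)=0$ for all $j$ only records that each value occurring in ${\bf w}$ occurs somewhere in ${\bf v}$; it carries no multiplicity information, so the multiset equality needed for (1) cannot be recovered from it. Your proposed repair---reading (4) as the divisibility $Q_{\bf w}(T)\mid Q_{\bf v}(T)$, which for monic polynomials of equal degree forces $Q_{\bf w}=Q_{\bf v}$ and hence (1) via (2)---is sound; an alternative is to note that (4) is equivalent to (1)--(3) precisely when ${\bf w}$ has no repeated entries, since then the $n$ distinct roots $w_1,\ldots,w_n$ already determine the degree-$n$ monic polynomial $Q_{\bf v}$. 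It is worth observing that the paper's own use of the lemma tacitly concedes your point: in Section~\ref{permutations_matrices}, condition (4) is what defines the affine variety $\mathcal V_{\bf v}$, and in Example~\ref{XA=BY} that variety contains the spurious point $(0,0)$, whose image ${\bf w}({\bf x})$ is the zero vector --- it satisfies (4) because $0$ is an entry of ${\bf v}$, but it is certainly not a permutation of ${\bf v}$, and the paper has to discard it by hand. So keep the explicit flag you propose in your write-up: (4) as stated is a necessary condition for (1), not an equivalent one.
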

This criterion seems to be more convenient to decide if a desired permutation $\sigma$ exists, but it does not determines it. Below we present a method that finds all the permutations $\sigma$, if they exist.

Let ${\bf v}=(v_1,\ldots,v_n)\in\mathbb K^n$. For each $i=1,\ldots,n$, consider the polynomial $$p_{i,{\bf v}}(T)=\frac{Q_{\bf v}(T)}{T-v_i}\in\mathbb K[T].$$ Let $D_{\bf v}(T)$ be the greatest common divisor of the polynomials $p_{1,{\bf v}}(T),\ldots,p_{n,{\bf v}}(T)$, and consider the polynomial vector:
$${\bf R}_{\bf v}(T):=\left(\frac{p_{1,{\bf v}}(T)}{D_{\bf v}(T)},\ldots,\frac{p_{n,{\bf v}}(T)}{D_{\bf v}(T)}\right)\in\mathbb K[T]^n.$$

Suppose ${\bf w}=(w_1,\ldots,w_n)\in\mathbb K^n$ is obtained by permuting the entries of ${\bf v}$. Suppose $$w_{j_1}=\cdots=w_{j_s}=v_{i_1}=\cdots=v_{i_s}.$$ Then, for all $u=1,\ldots,s$,

$${\bf R}_{\bf v}(w_{j_u})=(0,\ldots,0,\underbrace{a_u}_{i_1},0,\ldots,0,\underbrace{a_u}_{i_s},0,\ldots,0),$$ where $a_u\neq 0$. Next, consider the vector $$\mathcal R_{j_u,\bf v}=\frac{1}{a_u}{\bf R}_{\bf v}(w_{j_u}).$$

By placing in order the vectors $\mathcal R_{1,\bf v},\mathcal R_{2,\bf v},\ldots,\mathcal R_{n,\bf v}$ as the rows of a matrix, we obtain an $n\times n$ matrix with entries only 0's and 1's. In the instance of repeated entries as above, the rows $j_1,\ldots,j_s$ will be the same having 1's in positions $i_1,\ldots,i_s$, and 0's everywhere else. This is saying that we can choose $\sigma$ such that $\sigma(j_u)=i_v$, for any $u,v\in\{1,\ldots,s\}$. So from this $n\times n$ matrix we can extract an $n\times n$ ``submatrix'' that has exactly one 1 in each row and each column, hence it is a permutation matrix; namely it is $I_n(\sigma)$. If the vector ${\bf v}$ has exactly $t$ distinct entries, each repeating $s_i$ times, then, as it should happen, we have $s_1!\cdot s_2!\cdot\cdots\cdot s_t!$ options to select this submatrix.

\begin{exm}\label{method3} We apply the above method for the vectors ${\bf v}$ and ${\bf w}$ from Example \ref{method1}. We have
\begin{eqnarray}
p_{1,{\bf v}}(T)&=&(T-1)^2(T-3)T^2\nonumber\\
p_{2,{\bf v}}(T)&=&(T-6)(T-3)(T-1)T^2\nonumber\\
p_{3,{\bf v}}(T)&=&(T-6)(T-1)^2T^2\nonumber\\
p_{4,{\bf v}}(T)&=&(T-6)(T-3)(T-1)T^2\nonumber\\
p_{5,{\bf v}}(T)&=&(T-6)(T-3)(T-1)^2T\nonumber\\
p_{6,{\bf v}}(T)&=&(T-6)(T-3)(T-1)^2T.\nonumber
\end{eqnarray}

Then $D_{\bf v}(T)=(T-1)T$, and hence
\[ {\bf R}_{\bf v}(T)=\text{\tiny{\(((T-1)(T-3)T,(T-6)(T-3)T,(T-6)(T-1)T,(T-6)(T-3)T,(T-6)(T-3)(T-1),(T-6)(T-3)(T-1)).\)} } \]

\begin{eqnarray}
{\bf R}_{\bf v}(w_1)={\bf R}_{\bf v}(0)&=&(0,0,0,0,3,3)\nonumber\\
{\bf R}_{\bf v}(w_2)={\bf R}_{\bf v}(3)&=&(0,0,3,0,0,0)\nonumber\\
{\bf R}_{\bf v}(w_3)={\bf R}_{\bf v}(6)&=&(6,0,0,0,0,0)\nonumber\\
{\bf R}_{\bf v}(w_4)={\bf R}_{\bf v}(1)&=&(0,3,0,3,0,0)\nonumber\\
{\bf R}_{\bf v}(w_5)={\bf R}_{\bf v}(1)&=&(0,3,0,3,0,0)\nonumber\\
{\bf R}_{\bf v}(w_6)={\bf R}_{\bf v}(0)&=&(0,0,0,0,3,3).\nonumber
\end{eqnarray}

The $6\times 6$ matrix with entries only 0's and 1's is $\left(\begin{array}{cccccc}0&0&0&0&1&1\\0&0&1&0&0&0\\1&0&0&0&0&0\\0&1&0&1&0&0\\0&1&0&1&0&0 \\0&0&0&0&1&1 \end{array}\right)$, from which we can select the submatrix
$$I_6(\sigma)=\left(\begin{array}{cccccc}0&0&0&0&1&0\\0&0&1&0&0&0\\1&0&0&0&0&0\\0&1&0&0&0&0\\0&0&0&1&0&0 \\0&0&0&0&0&1 \end{array}\right),$$ meaning that the permutation is $\sigma=\left(\begin{array}{cccccc}1&2&3&4&5&6\\5&3&1&2&4&6\end{array}\right)=(15423)$.

The other three submatrices are the following (with the corresponding permutations):

$$\left(\begin{array}{cccccc}0&0&0&0&1&0\\0&0&1&0&0&0\\1&0&0&0&0&0\\0&0&0&1&0&0\\0&1&0&0&0&0 \\0&0&0&0&0&1 \end{array}\right)\mbox{ with permutation }(1523),$$
$$\left(\begin{array}{cccccc}0&0&0&0&0&1\\0&0&1&0&0&0\\1&0&0&0&0&0\\0&1&0&0&0&0\\0&0&0&1&0&0 \\0&0&0&0&1&0 \end{array}\right)\mbox{ with permutation }(165423),$$ and
$$\left(\begin{array}{cccccc}0&0&0&0&0&1\\0&0&1&0&0&0\\1&0&0&0&0&0\\0&0&0&1&0&0\\0&1&0&0&0&0 \\0&0&0&0&1&0 \end{array}\right)\mbox{ with permutation }(16523).$$
\end{exm}

\subsection{Detecting permutations in matrices.} \label{permutations_matrices} In this part we apply Method 3 above (Section \ref{Method_3}) to tackle the problem of solving the matrix equation $$XA=BY,$$ where $A$ and $B$ are given $k\times n$ matrices, and $X$ is an invertible $k\times k$ matrix and $Y$ is an $n\times n$ permutation matrix, both of which we want to find.

Suppose the the first row of $X$ is the vector (of indeterminates) ${\bf x}:=(x_1,x_2,\ldots,x_k)$. If $\ell_1({\bf x}),\ldots,\ell_n({\bf x})\in\mathbb K[x_1,\ldots,x_k]$ are the linear forms dual to the columns of $A$ (in this order), then first we would like to find $x_1,\ldots,x_k$ such that the vector $${\bf w}({\bf x}):={\bf x}\cdot A=(\ell_1({\bf x}),\ldots,\ell_n({\bf x})),$$ equals $\sigma * {\bf v}$, for some $\sigma\in\mathbb S_n$, where ${\bf v}$ is the first row of the given matrix $B$.

By Lemma \ref{equivalences} (4), the solutions of our problem will be points in $\mathbb K^k$, common zero locus of the polynomials
$$Q_{\bf v}(\ell_1({\bf x})), Q_{\bf v}(\ell_2({\bf x})),\ldots,Q_{\bf v}(\ell_n({\bf x})),$$ hence an {\em affine variety} (see \cite{clo2}). We denote this variety $\mathcal V_{\bf v}$.

For each solution ${\bf x}$ obtained, we will use Method 3 to determine the corresponding permutation(s) $\sigma\in \mathbb S_n$, such that ${\bf w}({\bf x})=\sigma * {\bf v}$. Denote this set of permutations with $\Lambda_{{\bf x},{\bf v}}$, and let $$\Lambda_{\bf v}:=\bigcup_{\bf x} \Lambda_{{\bf x},{\bf v}}.$$

\medskip

Next, we apply the same computations when ${\bf v}$ is the second row, third row, all the way to the last row of the matrix $B$. This way we obtain $k$ sets of permutations $\Lambda_{{\bf v}_1},\ldots,\Lambda_{{\bf v}_k}$. Since we assume that our matrix equation has a solution, the intersection $\displaystyle \Lambda:=\Lambda_{{\bf v}_1}\cap\cdots\cap\Lambda_{{\bf v}_k}$ is non-empty. Any $\sigma\in\Lambda$ will give us the matrix $Y=I_n(\sigma^{-1})$, and any $k$ vectors ${\bf x}_1,\ldots,{\bf x}_k$ with ${\bf w}({\bf x}_i)=\sigma * {\bf v}_i, i=1,\ldots,k$, will be the rows, in this order, of the matrix $X$.

Another idea to find $\sigma\in\Lambda$ without applying Method 3 $k$ times, but just once, is to check which permutations from $\Lambda_{{\bf v}_1}$ are in $\Lambda_{{\bf v}_2}$, and from this smaller set to check which permutation is in $\Lambda_{{\bf v}_3}$, and so forth.

\begin{exm}\label{XA=BY} Suppose $\mathbb K=\mathbb R$, and let
$$A=\left(\begin{array}{rrrr}8&-1&1&2\\11&-2&1&3 \end{array}\right)\mbox{ and }B=\left(\begin{array}{rrrr}1&0&1&2\\-1&1&0&3 \end{array}\right).$$

We have ${\bf x}=(x_1,x_2)$, and $$\ell_1({\bf x})=8x_1+11x_2,\ell_2({\bf x})=-x_1-2x_2,\ell_3({\bf x})=x_1+x_2,\ell_4({\bf x})=2x_1+3x_2.$$

$\bullet$ Solving the system (using \cite{GrSt})
\begin{eqnarray}
(\ell_1({\bf x})-1)(\ell_1({\bf x})-0)(\ell_1({\bf x})-1)(\ell_1({\bf x})-2)&=&0\nonumber\\
(\ell_2({\bf x})-1)(\ell_2({\bf x})-0)(\ell_2({\bf x})-1)(\ell_2({\bf x})-2)&=&0\nonumber\\
(\ell_3({\bf x})-1)(\ell_3({\bf x})-0)(\ell_3({\bf x})-1)(\ell_3({\bf x})-2)&=&0\nonumber\\
(\ell_4({\bf x})-1)(\ell_4({\bf x})-0)(\ell_4({\bf x})-1)(\ell_4({\bf x})-2)&=&0\nonumber
\end{eqnarray} gives the solutions $\{(3,-2),(0,0)\}$. We drop the solution $(0,0)$ because this vector cannot be the first row of the invertible matrix $X$.

We then have ${\bf w}(3,-2)=(2,1,1,0)$, and $$\Lambda_{{\bf v}_1}=\{(142),(1423)\}.$$

$\bullet$ Solving the system (using \cite{GrSt})
\begin{eqnarray}
(\ell_1({\bf x})+1)(\ell_1({\bf x})-1)(\ell_1({\bf x})-0)(\ell_1({\bf x})-3)&=&0\nonumber\\
(\ell_2({\bf x})+1)(\ell_2({\bf x})-1)(\ell_2({\bf x})-0)(\ell_2({\bf x})-3)&=&0\nonumber\\
(\ell_3({\bf x})+1)(\ell_3({\bf x})-1)(\ell_3({\bf x})-0)(\ell_3({\bf x})-3)&=&0\nonumber\\
(\ell_4({\bf x})+1)(\ell_4({\bf x})-1)(\ell_4({\bf x})-0)(\ell_4({\bf x})-3)&=&0\nonumber
\end{eqnarray} gives the solutions $\{(-1,1),(0,0)\}$. We drop the solution $(0,0)$ because this vector cannot be the second row of the invertible matrix $X$.

We then have ${\bf w}(-1,1)=(3,-1,0,1)$, and $$\Lambda_{{\bf v}_2}=\{(142)\}.$$

\medskip

We obtained $\displaystyle\Lambda=\Lambda_{{\bf v}_1}\cap \Lambda_{{\bf v}_2}=\{(142)\}$, hence the solution of the matrix equation $XA=BY$ is
$$Y=I_4((241))=\left(\begin{array}{cccc}0&1&0&0\\0&0&0&1\\0&0&1&0\\1&0&0&0 \end{array}\right)\mbox{ and }X=\left(\begin{array}{rr}3&-2\\-1&1 \end{array}\right).$$
\end{exm}

Unfortunately, finding the affine variety $\mathcal V_{\bf v}$, requires a lot of computer power. If the polynomial $Q_{\bf v}(T)$ has $u$ distinct factors, then in order to compute this variety we need to solve $u^n$ linear systems of $n$ equations and $k$ indeterminates. Even for simple examples, such as Example \ref{attack_mincodewords} below, Macaulay 2 (\cite{GrSt}) failed to complete the task in reasonable time.
\section{Application: an attack on McEliece cryptosystems built on Reed-Solomon codes}\label{attacks}

\subsection{Linear Codes.} \label{linear_codes} The basics of coding theory can be found for example in \cite{hp1}, or for a very friendly introduction, in \cite{TrWa}. Below we sum up some of the most important concepts and techniques.

Let $\mathcal C$ be an $[n,k,d]-$linear code with generating matrix (in canonical bases) $$G=\left(\begin{array}{cccc}a_{11}&a_{12}&\cdots&a_{1n}\\ a_{21}&a_{22}&\cdots&a_{2n}\\\vdots&\vdots& &\vdots\\ a_{k1}&a_{k2}&\cdots&a_{kn}\end{array}\right),$$ where $a_{ij}\in\mathbb K$, any field.

By this, one understands that $\mathcal C$ is the image of the injective linear map $$\phi:\mathbb K^k\stackrel{G}\longrightarrow \mathbb K^n.$$ $n$ is the {\em length} of $\mathcal C$, $k$ is the {\em dimension} of $\mathcal C$ and $d$ is the {\em minimum distance (or Hamming distance)}, the smallest number of non-zero entries in a non-zero element of $\mathcal C$.

The elements of $\mathcal C$ are called {\em codewords}. To decode a codeword ${\bf y}$, means to find (the unique) ${\bf x}\in\mathbb K^k$ such that ${\bf x}\cdot G={\bf y}$.

\subsubsection{Decoding via Gaussian elimination.} \label{decoding} One rather simple technique to find ${\bf x}=(x_1,\ldots,x_k)$ is the following: to the generating matrix $G$ append ${\bf y}$ as a last row, and the vector of new indeterminates $(r_1,\ldots,r_k,y)^T$ as a last column, to obtain a new matrix $\bar{G}_{\bf y}$ of size $(k+1)\times(n+1)$. If one does Gaussian elimination to bring $\bar{G}_{\bf y}$ to echelon form, the entry in position $(k+1,n+1)$ is exactly $y-(x_1r_1+\cdots+x_kr_k)$ that should equal 0. Hence we obtained the vector ${\bf x}$. This works because ${\bf y}$ is the linear combination of the $k$ rows of $G$, with coefficients $x_1,\ldots,x_k$.

As an example, with $\mathbb K=\mathbb F_3$, suppose $G=\left(\begin{array}{ccccc}1&0&2&1&0\\1&1&0&1&2\\0&2&1&2&1\end{array}\right)$ and ${\bf y}=(2,2,1,0,1)$. The new matrix is $\bar{G}_{\bf y}=\left(\begin{array}{ccccccc}1&0&2&1&0&|&r_1\\1&1&0&1&2&|&r_2\\0&2&1&2&1&|&r_3\\1&1&2&0&2&|&y\end{array}\right),$ with echelon form
$$\left(\begin{array}{ccccccc}1&0&2&1&0&|&r_1\\0&1&1&0&2&|&2r_1+r_2\\0&0&2&2&0&|&2r_1+r_2+3r_3\\0&0&0&0&0&|&y+2r_1+2r_2+r_3\end{array}\right).$$ This implies that $y+2r_1+2r_2+r_3=0$, which gives ${\bf x}=(1,1,2)$.

\medskip

In Section \ref{Introduction} we mentioned already the parity check matrix of a code. This $(n-k)\times n$ matrix $H:=[-A^T|I_{n-k}]$ is important because the fundamental property: ${\bf y}=(y_1,\ldots,y_n)\in\mathcal C$ if and only if $H{\bf y}^T=0$. So the entries of ${\bf y}$ satisfy $n-k$ homogeneous linear equations; in fact, these $n-k$ relations form a basis for the space, denoted $F(\mathcal C)$, of the linear dependencies among the columns of $G$.

\medskip

For any vector ${\bf w}\in\mathbb K^n$, the {\em weight} of ${\bf w}$, denoted $wt({\bf w})$, is the number of non-zero entries in ${\bf w}$. Codewords of weight equal to $d$ are called {\em codewords of minimum weight}, so such a codeword is a non-zero vector ${\bf y}=(y_1,\ldots,y_n)$ in the kernel of the parity-check matrix, for which there exist $1\leq j_1<\ldots<j_{n-d}\leq n$ with $y_{j_1}=\cdots=y_{j_{n-d}}=0$. It is clear that any such vector multiplied by a non-zero scalar will lead to another codeword of minimum weight. Therefore, it is more convenient to consider {\em projective} codewords of minimum weight, which will mean the equivalence classes of the equivalence relation: ${\bf w}_1\sim{\bf w}_2$ if and only if ${\bf w}_1=c{\bf w}_2,$ for some $c\in\mathbb\setminus\{0\}$. The equivalence class of ${\bf w}$ is denoted $[{\bf w}]$.

To sum up, the set of projective codewords of minimum weight of an $[n,k,d]-$linear code $\mathcal C$ form the projective variety $$\mathcal V(\mathcal C,d+1):=V(F(\mathcal C),\{y_{i_1}\cdots y_{i_{d+1}}|\mbox{ for all }1\leq i_1<\cdots<i_{d+1}\leq n\})\subset \mathbb P^{n-1};$$ see \cite[Section 3.2]{GaTo}) for details. This remark is the analog of the method described in \cite{t} which was based on \cite{dp}, but using projective varieties in $\mathbb P^{n-1}$, instead of projective varieties in $\mathbb P^{k-1}$.

\subsubsection{Reed-Solomon codes.} \label{Reed-Solomon} We present these classical linear codes from the point of view of BCH codes (see \cite[Section 18.9]{TrWa}). Let $\mathbb K:=\mathbb F_q$ be the finite field with $q$ elements. Let $n=q-1$, and let $\alpha$ be a primitive $n-$th root of unity. Let $1\leq d<n$ and consider the polynomial $$g(x):=(x-\alpha)(x-\alpha^2)\cdots(x-\alpha^{d-1})=g_0+g_1x+\cdots+g_{d-2}x^{d-2}+x^{d-1}\in \mathbb F_q[x].$$ The {\em Reed-Solomon code} is defined to be the linear code with generating matrix of size $k\times n$, with $k=n+1-d$,

$$G=\left(\begin{array}{cccccccc}g_0&g_1&\cdots&g_{d-2}&1&0&\cdots&0\\0&g_0&g_1&\cdots&g_{d-2}&1&\cdots&0\\ \vdots&&\ddots&\ddots&&\ddots&\ddots&\\ 0&\cdots&0&g_0&g_1&\cdots&g_{d-2}&1\end{array}\right), g_i\neq 0, i=0,\ldots, d-2.$$ It is known that the Reed-Solomon code has parameters $[n,k,d]$, hence, because $d=n-k+1$, it is a {\em Maximum Distance Separable (MDS)} code.

If $\mathcal C$ is an $[n,k,d]-$linear code with $d=n-k+1$ (so an MDS code), then the number of projective codewords of minimum weight is $\displaystyle {{n}\choose{d}}$ (see for example \cite[Corollary 3.3]{t}). In the setup of MDS codes, the question we want to answer is the following: for any choice of $1\leq i_1<\cdots<i_{n-d}\leq n$, is $V(F(\mathcal C),y_{i_1},\ldots,y_{i_{n-d}})$ a single projective codeword of minimum weight? By the lemma below, the answer is yes.

\begin{lem} Let $\mathcal C$ be an $[n,k,d]$ MDS code. Then, for any $1\leq i_1<\cdots<i_{n-d}\leq n$, one has
$$\dim_{\mathbb K}{\rm Span}_{\mathbb K}(F(\mathcal C)\cup\{y_{i_1},\ldots,y_{i_{n-d}}\})=n-1.$$
\end{lem}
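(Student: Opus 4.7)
The plan is to observe that we are being asked to prove that a certain set of linear forms in $\mathbb K[y_1,\ldots,y_n]$ is linearly independent. Recall that $F(\mathcal C)$ has dimension $n-k$ (it is spanned by the rows of any parity-check matrix, which are linearly independent), and the MDS hypothesis $d=n-k+1$ gives $n-d=k-1$. Hence the union $F(\mathcal C)\cup\{y_{i_1},\ldots,y_{i_{n-d}}\}$ contains at most $(n-k)+(k-1)=n-1$ linear forms, and the lemma will follow once I show that these $n-1$ forms are linearly independent.

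I would translate linear independence of $n-1$ linear forms on $\mathbb K^n$ into the statement that their common vanishing locus in $\mathbb K^n$ has dimension exactly $1$. A vector $\mathbf{v}=(v_1,\ldots,v_n)$ lies in this common zero locus precisely when (a) every $\ell\in F(\mathcal C)$ vanishes on $\mathbf v$, i.e.\ $\mathbf v\in\mathcal C$, and (b) $v_{i_1}=\cdots=v_{i_{k-1}}=0$. Writing $T=\{i_1,\ldots,i_{k-1}\}$, the locus is therefore $\ker\pi_T$, where
$$\pi_T:\mathcal C\longrightarrow\mathbb K^{k-1},\qquad \mathbf v\longmapsto(v_{i_1},\ldots,v_{i_{k-1}}).$$

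The key step is to use the MDS property to show $\pi_T$ is surjective. Parametrize $\mathcal C$ by $u\mapsto uG$, so $\pi_T(uG)=uG_T$, where $G_T$ is the $k\times(k-1)$ submatrix of $G$ obtained by selecting the columns indexed by $T$. The defining property of an MDS code is that any $k$ columns of $G$ are linearly independent (if some $k$ columns were dependent, a non-zero codeword would be supported on at most $n-k=d-1$ positions, contradicting the minimum distance). In particular the $k-1$ columns of $G_T$ are linearly independent, so $G_T$ has rank $k-1$, $\pi_T$ is surjective, and by rank-nullity $\dim\ker\pi_T=k-(k-1)=1$.

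I do not expect a real obstacle here; the only place where the MDS assumption truly enters is the statement that every $k$ columns of $G$ are linearly independent, and once that is invoked the rest is a dimension count. An alternative packaging is to recall that the dual of an MDS code is MDS of parameters $[n,n-k,k+1]$ and to argue that any non-zero element of $\mathcal C^{\perp}$ has weight $\geq k+1>k-1$, so $\mathcal C^{\perp}\cap\mathrm{Span}(e_{i_1},\ldots,e_{i_{k-1}})=\{0\}$; but the direct approach above avoids invoking duality for MDS codes.
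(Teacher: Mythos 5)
Your proof is correct, and it reaches the same essential fact as the paper --- that for an MDS code any $k-1$ columns of a generator matrix are linearly independent --- but it packages the argument differently. The paper works on the side of the linear forms and argues by contradiction: a dependence among the $n-1$ forms would produce a nonzero element of $F(\mathcal C)$ supported on $\{i_1,\ldots,i_{n-d}\}$, hence a linear dependence among the corresponding $k-1$ columns of a generating matrix, and it then invokes a cited result (\cite[Remarks 2.2 and 2.3]{ToVa}) to conclude that the minimum distance would drop below $n-k+1$. You instead dualize to the common vanishing locus, identify it with $\ker\pi_T$ for the coordinate projection $\pi_T$ restricted to $\mathcal C$, and get the dimension count from rank--nullity after showing $\pi_T$ is surjective; crucially, you prove the needed column-independence statement from scratch via the standard support argument (a dependence among $k$ columns yields a nonzero codeword of weight at most $n-k=d-1$) rather than citing it. What each buys: the paper's version is shorter given the external reference and stays entirely in the language of $F(\mathcal C)$ used throughout Section \ref{linear_codes}; yours is self-contained, makes explicit exactly where the MDS hypothesis enters, and exhibits the one-dimensional solution space (the projective codeword of minimum weight) rather than only its dimension, which is in fact the object the surrounding discussion cares about.
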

\begin{proof} Let $M$ be a generating matrix of $\mathcal C$, and suppose there exist an element of $F(\mathcal C)$ of the form $c_1y_{i_1}+\cdots+c_{n-d}y_{i_{n-d}}$. Then, the corresponding $n-d=k-1$ columns of $M$ are linearly dependent, so they span a subspace of dimension $<k-1$. By \cite[Remarks 2.2 and 2.3]{ToVa} we obtain that the minimum distance of $\mathcal C$ is $<n-k+1$, which is a contradiction.

So the $n-k$ elements of the basis of $F(\mathcal C)$ together with $y_{i_1},\ldots,y_{i_{n-d}}$, are $n-k+n-d=n-1$ linearly independent linear forms.
\end{proof}

\subsection{McEliece cryptosystems.} \label{McEliece} We follow the description of this public key cryptosystem according to \cite[Section 18.10]{TrWa}.

{\em Bob} chooses a $k\times n$ matrix $G$ that is the generating matrix of an $[n,k,d]-$linear code, denoted $\mathcal C$. Also he chooses a $k\times k$ invertible matrix $S$, and a $n\times n$ permutation matrix $P$. {\em Bob} keeps the matrices $G, S, P$ secret and makes public (the public key) the $k\times n$ matrix $$G_1:=SGP.$$

{\em Alice} wants to send privately to {\em Bob} the plain text ${\bf x}\in\mathbb K^{k}$. To encrypt ${\bf x}$, {\em Alice} chooses randomly a vector ${\bf e}\in\mathbb K^n$ with $wt({\bf e})=t\leq\lfloor(d-1)/2\rfloor$. She forms the the ciphertext by computing $${\bf y}:={\bf x}G_1+{\bf e}.$$

{\em Bob} decrypts ${\bf y}$ as follows:
\begin{enumerate}
  \item He calculates ${\bf y}_1={\bf y}P^{-1}={\bf x}SG+{\bf e}_1$, where ${\bf e}_1={\bf e}P^{-1}$. Since $P^{-1}$ is also a permutation matrix, the weight of ${\bf e}_1$ also equals $t$.
  \item Since $S$ is invertible, and since $SG$ is a $k\times n$ matrix with its rows being linear combinations of the rows of $G$, then $SG$ is a generating matrix for the same linear code $\mathcal C$. {\em Bob} ``error-corrects'' ${\bf y}_1$ to get rid of the ``error'' ${\bf e}_1$, and obtains the nearest-neighbor codeword $\bar{\bf y}:={\bf x}SG\in\mathcal C$. The condition $t\leq\lfloor(d-1)/2\rfloor$ ensures that $\bar{\bf y}$ is unique.
  \item {\em Bob} decodes $\bar{\bf y}$: he finds $\bar{\bf x}\in\mathbb K^k$ with $\bar{\bf y}=\bar{\bf x}G$.
  \item {\em Bob} computes ${\bf x}=\bar{\bf x}S^{-1}$.
\end{enumerate}

The secret matrix $G$ is chosen in such manner that it is very efficient to encrypt ${\bf x}$ and decrypt ${\bf y}$, based upon effective algorithms to error-correct and decode this linear code (choosing Goppa codes seems to balance the security requirements and the efficiency mentioned above). An outside attacker has knowledge only of the public key, which is the matrix $G_1$. So a priori, in order to error-correct and decode the intercepted ciphertext ${\bf y}$ to find the secret plaintext ${\bf x}$, the attacker needs to apply ``general'' algorithms; and this is known to be an NP-hard problem. But if the permutation matrix $P$ is known to the attacker, then the matrix $G_1P^{-1}$ is a generating matrix of the \underline{same} secret chosen code. Knowing what kind of code was chosen initially, the attacker can use the same efficient algorithms to error-correct and decode. So keeping the permutation matrix secret is of utmost importance for the security of the McEliece cryptosystem.

\subsubsection{McEliece cryptosystems built on Reed-Solomon codes.} \label{McElieceReedSolomon} In what follows we assume that $G$ is the generating matrix of a Reed-Solomon code.

If the outside attacker knows what is the chosen primitive element $\alpha$ of the base field $\mathbb K:=\mathbb F_q$, then he/she will know what is the matrix $G$ (see Section \ref{Reed-Solomon}). This is because $G_1$ is a $k\times n$ generating matrix of an MDS $[n,k]-$linear code, hence $d=n-k+1$.

Suppose the outside attacker doesn't know the chosen primitive element of $\mathbb K$. Since $\mathbb K$ is the finite field with $q$ elements, then $q=p^z$, for some prime number $p$, and so the number of primitive elements of $\alpha\in\mathbb F_q$ equals the Euler's function evaluated at $q-1$, i.e., $\phi(q-1)$. In this instance, for each primitive element $\alpha$, the matrix $G_{\alpha}$ that generates the corresponding Reed-Solomon code is known. So the attacker may try all the possible primitive elements and see which one works; obviously not the best strategy to pursue. It seems that finding the right primitive element translates into a complex Discrete Log problem, and this will be the focus of a future project (also check Section \ref{primitive}).

\medskip

Suppose we are the attackers, and we know the chosen primitive element $\alpha$. The our goal is to find the matrices $S$ and $P$ from $G_1=SGP$. As we mentioned at the beginning, we want to solve the matrix equation $$XA=BY,$$ when $A=G_1$, and $B=G$ are given.

\medskip

Denote ${\bf g}:=(g_0,\ldots,g_{d-2},1,0,\ldots,0)$, the first row of $G$. Let $\tau\in\mathbb S_n$ denote the cycle $(n\, n-1\,\cdots\, 2\, 1)$. Then the second row of $G$ is $\tau*{\bf g}$, the third row of $G$ is $\tau^2*{\bf g}$, and so forth. This observation reduces drastically the computations. Instead of solving $k$ systems of equations
\[
\left\{
  \begin{array}{ccc}
    Q_{\bf v}(\ell_1({\bf x}))&=&0 \\
    Q_{\bf v}(\ell_2({\bf x}))&=&0 \\
    &\vdots& \\
    Q_{\bf v}(\ell_n({\bf x}))&=&0,
  \end{array}
\right.
\] with ${\bf v}$ scanning over all rows of $G$, it is enough to solve just one system when ${\bf v}={\bf g}$.

The solution will consist of at least $k$ points ${\bf x}_1,\ldots,{\bf x}_m$, and for each $i=1,\ldots,m$ define
$$\Delta_i:=\{\sigma\in\mathbb S_n|{\bf w}({\bf x}_i)=\sigma*{\bf g}\}.$$

What remains to be found is a set of distinct indices $i_1,\ldots,i_k\in\{1,\ldots,m\}$, and a permutation $\sigma$ such that $$\tau^{j-1}\sigma\in \Delta_{i_j},\mbox{ for all }j=1,\ldots,k.$$ In this instance, $\sigma$ will give the permutation matrix $P=I_n(\sigma^{-1})$, and ${\bf x}_{i_1},\ldots,{\bf x}_{i_k}$ will give, in this particular order, the rows of the matrix $S^{-1}$.

\medskip

We have that for all $j=1,\ldots,k$, $${\bf w}({\bf x}_{i_j})=(\tau^{j-1}\sigma)*{\bf g},$$ for some $\sigma\in\mathbb S_n$.

Since ${\bf w}({\bf x}_{i_1})=(\sigma)*{\bf g}$, we can determine all other ${\bf x}_{i_2},\ldots,{\bf x}_{i_k}$, via the formula
$${\bf w}({\bf x}_{i_j})=(\sigma^{-1}\tau^{j-1}\sigma)*{\bf w}({\bf x}_{i_1})={\bf w}({\bf x}_{i_1})\cdot I_n(\sigma^{-1}(\tau^{-1})^{j-1}\sigma), j=2,\ldots,k.$$

To sum up, the way this attack is going is the following:
\begin{itemize}
  \item[STEP 1:] Compute the affine variety $\mathcal V_{\bf g}:=V(Q_{\bf g}(\ell_1({\bf x})),\ldots,Q_{\bf g}(\ell_n({\bf x})))=\{{\bf x}_1,\ldots,{\bf x}_m\}$.
  \item[STEP 2:] Record all the vectors $W:=\{{\bf w}({\bf x}_1),\ldots,{\bf w}({\bf x}_m)\}$.
  \item[STEP 3:] Pick an ${\bf x}_{i_0}\in\mathcal V$, and compute $\Delta_{i_0}$.
  \item[STEP 4:] Pick a $\sigma\in\Delta_{i_0}$ and calculate $(\sigma^{-1}\tau^{j-1}\sigma)*{\bf w}({\bf x}_{i_0})$ for $j=2,\ldots,k$, where $\tau=(n\, n-1\,\cdots\, 2\, 1)$.
  \item[STEP 5:] If there is one vector at STEP 4 that does not belong to $W$, then pick a different permutation in $\Delta_{i_0}$, and repeat STEP 4.
  \item[STEP 6:] If for each permutation in $\Delta_{i_0}$ there is a vector not in $W$, then pick a different element of $\mathcal V$, and redo the algorithm from STEP 3.
\end{itemize}

We mentioned already that computing the affine variety $\mathcal V_{\bf g}$ is a challenge, so in what follows we will use codewords of minimum weight to avoid STEP 1 above.

\medskip

\noindent AN ATTACK USING CODEWORDS OF MINIMUM WEIGHT. For this attack we have to rely on projective geometry. The projective vector $$[{\bf g}]:=[g_0,\ldots,g_{d-2},1,0,\ldots,0],$$ which is the first row of $G$ is a projective codeword of minimum weight of $G_1$, after permuting its entries according to the matrix $P$. In fact each of the $k$ rows of $G$ will lead to a distinct projective codeword of minimum weight for $G_1$.

Since $G_1$ generates an MDS code (i.e., $d=n-k+1$), from the $\displaystyle {{n}\choose{d}}$ projective codewords of minimum weight of this code, we will select those that, after a permutation, equal to $[{\bf g}]$. Once these were determined, then STEPS 1 and 2 in the previous algorithm are completed, and we can continue with the remaining STEPS to determine the matrix $P$. After this, a simple Gaussian elimination algorithm will give the matrix $S$.

Let us rewrite the notations and constructions used at the beginning of Section \ref{Method_3}. Suppose ${\bf v}=(v_1,\ldots,v_n)\in \mathbb K^n$ is a vector of weight $m$. Define the polynomial $$Q_{\bf v}(T)=(T-v_1)(T-v_2)\cdots(T-v_n)\in\mathbb K[T],$$ and denote with $q_i({\bf v}), i=0,\ldots,n$ the coefficient of $T^i$. By Vieta's formulas, with $q_n({\bf v})=1$, for $k=1,\ldots,n$, one has $$q_{n-k}({\bf v})=(-1)^k\sum_{1\leq i_1<\cdots<i_k\leq n}v_{i_1}v_{i_2}\cdots v_{i_k}.$$ Of course, if $wt({\bf v})=m$, then $q_{n-m-1}({\bf v})=\cdots=q_0({\bf v})=0$ and $q_{n-m}({\bf v})\neq 0$.

\begin{lem} \label{projective} Let ${\bf v}=(v_1,\ldots,v_n),{\bf w}=(w_1,\ldots,w_n)\in\mathbb K^n$. Then, there exist $\sigma\in \mathbb S_n$ such that $[{\bf w}]=[\sigma*{\bf v}]$ if and only if there exist $c\in\mathbb K\setminus\{0\}$ such that for all $i=0,\ldots,n$, one has $q_i({\bf w})=c^{n-i}q_i({\bf v})$.
\end{lem}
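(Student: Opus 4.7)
The plan is to translate $[{\bf w}]=[\sigma*{\bf v}]$ into a statement about $Q_{\bf w}$ being a rescaling of $Q_{\bf v}$, and then simply read off coefficients. First I would unpack the definition: $[{\bf w}]=[\sigma*{\bf v}]$ means ${\bf w}=c(\sigma*{\bf v})$ for some $c\in\mathbb K\setminus\{0\}$ and some $\sigma\in\mathbb S_n$. Because scalar multiplication commutes with the coordinate-permutation action (each coordinate of $\sigma*(c{\bf v})$ is $cv_{\sigma(j)}$), we have $c(\sigma*{\bf v})=\sigma*(c{\bf v})$, so the condition becomes: there exists $c\neq 0$ and $\sigma\in\mathbb S_n$ with ${\bf w}=\sigma*(c{\bf v})$. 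Applying Lemma~\ref{equivalences} ((1)$\Leftrightarrow$(2)) to the pair of vectors ${\bf w}$ and $c{\bf v}$, this is in turn equivalent to the existence of $c\neq 0$ with
\[
Q_{\bf w}(T)=Q_{c{\bf v}}(T)=\prod_{j=1}^{n}(T-cv_j).
\]

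For the forward direction I would now rewrite the right-hand side by factoring $c$ out of each linear factor:
\[
\prod_{j=1}^{n}(T-cv_j)=c^{n}\prod_{j=1}^{n}\!\left(\tfrac{T}{c}-v_j\right)=c^{n}Q_{\bf v}(T/c)=\sum_{i=0}^{n}c^{\,n-i}q_i({\bf v})\,T^{i},
\]
and then compare with $Q_{\bf w}(T)=\sum_{i=0}^{n}q_i({\bf w})T^{i}$ to conclude $q_i({\bf w})=c^{n-i}q_i({\bf v})$ for every $i=0,\ldots,n$. The top-degree case $i=n$ is the tautology $1=c^{0}\cdot 1$, a consistency check that imposes no restriction on $c$.

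For the converse, the same computation run in reverse shows that if $q_i({\bf w})=c^{n-i}q_i({\bf v})$ holds for all $i$ with the given $c\neq 0$, then $Q_{\bf w}(T)=c^{n}Q_{\bf v}(T/c)=Q_{c{\bf v}}(T)$, and Lemma~\ref{equivalences} again furnishes a permutation $\sigma\in\mathbb S_n$ with ${\bf w}=\sigma*(c{\bf v})=c(\sigma*{\bf v})$, i.e.\ $[{\bf w}]=[\sigma*{\bf v}]$. There is really no serious obstacle: the whole argument hinges on the two small observations that the $\mathbb S_n$-action commutes with scalar multiplication, and that the substitution $T\mapsto T/c$ produces the factor $c^{n-i}$ on the coefficient of $T^{i}$. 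Once those are noted, expanding and matching coefficients closes the proof.
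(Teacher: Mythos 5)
Your proposal is correct and follows essentially the same route as the paper: both reduce $[{\bf w}]=[\sigma*{\bf v}]$ to ${\bf w}=\sigma*(c{\bf v})$ using the fact that the $\mathbb S_n$-action commutes with scalars, invoke Lemma~\ref{equivalences}, and then compute that passing from ${\bf v}$ to $c{\bf v}$ multiplies $q_i$ by $c^{n-i}$. The only cosmetic difference is that you obtain this last fact from the identity $Q_{c{\bf v}}(T)=c^nQ_{\bf v}(T/c)$, while the paper reads it off directly from Vieta's formulas.
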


\begin{proof} We have that $[{\bf w}]=[\sigma*{\bf v}]$ if and only if there exists $c\in\mathbb K\setminus\{0\}$ such that ${\bf w}=c(\sigma*{\bf v})=\sigma*(c{\bf v})$. Then, from Lemma \ref{equivalences} (3), this is equivalent to $$q_i({\bf w})=q_i(c{\bf v})=(-1)^{n-i}\sum_{1\leq i_1<\cdots<i_{n-i}\leq n}(cv_{i_1})\cdots (cv_{i_{n-i}})=c^{n-i}q_i({\bf v}).$$
\end{proof}

For convenience, if ${\bf v}$ is a vector of weight $m$, and since $q_n({\bf v})=1$, we consider the vector
$${\bf q}({\bf w})=(-q_{n-1}({\bf w}),q_{n-2}({\bf w}),\ldots,(-1)^mq_{n-m}({\bf w})).$$ Then, with these notations, we have that $[{\bf w}]=[\sigma*{\bf v}]$ and $wt({\bf w})=wt({\bf v})=d$, if and only if there exists $c\in\mathbb K\setminus\{0\}$ such that
$${\bf q}({\bf w})={\bf q}({\bf v})\cdot \left(\begin{array}{cccc}c&0&\cdots&0\\0&c^2&\cdots&0\\ \vdots&\vdots&\ddots&\vdots\\0&0&\cdots&c^d\end{array}\right).$$

\medskip

If we want to select all the codewords of minimum weight whose entries are the same as the entries of ${\bf g}=(g_0,\ldots,g_{d-2},1,0,\ldots,0)$, but in different positions, one has to do the following:

$\bullet$ First find all projective codewords of minimum weight of the linear code $\mathcal C_1$ with generating matrix $G_1$, by determining $\mathcal V(\mathcal C_1,d+1)$ (see Section \ref{linear_codes}).

$\bullet$ Use Lemma \ref{projective}, and the comments after it to select those $[{\bf w}]\in \mathcal V(\mathcal C_1,d+1)$ that are permutations of $[{\bf g}]$.

$\bullet$ To find the right representative we determine the corresponding $c$, and multiply ${\bf w}$ by $c^{-1}$.

This way we obtain the list of vectors $W$ mentioned at STEP 2 in Section \ref{McElieceReedSolomon}. To finish this attack, we proceed with the remaining steps presented back in that section, with the caveat that after we obtain the appropriate $k$ vectors in $W$ that give the desired permutation matrix $P$, we have to decode these vectors to obtain the rows of $S^{-1}$. For this purpose, we can use the decoding algorithm briefly described in Section \ref{decoding}.

\begin{exm} \label{attack_mincodewords} Suppose $\mathbb K:=\mathbb F_7$, the prime field with $7$ elements, with primitive element considered being $\alpha=3$. Then $$G=\left(\begin{array}{cccccc}6&1&3&1&0&0\\0&6&1&3&1&0\\0&0&6&1&3&1\end{array}\right),$$ so $n=6$, $d=4$, and $k=3$ (we borrowed the first Example in \cite[Chapter 18.9]{TrWa}). Suppose we have
$$G_1=\left(\begin{array}{cccccc}1&2&3&6&6&1\\4&0&4&6&1&0\\0&2&5&4&6&3\end{array}\right).$$

The parity-check matrix is
$$H_{G_1}=\left(\begin{array}{cccccc}3&4&6&1&0&0\\1&1&4&0&1&0\\3&6&4&0&0&1\end{array}\right),$$ and hence $$F(\mathcal C)=\langle 3y_1+4y_2+6y_3+y_4,y_1+y_2+4y_3+y_5,3y_1+6y_2+4y_3+y_6\rangle.$$

The projective codewords of minimum weight are the $\displaystyle {{6}\choose{4}}=15$ distinct solutions of the system of equations
\begin{eqnarray}
3y_1+4y_2+6y_3+y_4&=&0\nonumber\\
y_1+y_2+4y_3+y_5&=&0\nonumber\\
3y_1+6y_2+4y_3+y_6&=&0\nonumber\\
y_1y_2y_3y_4y_5&=&0\nonumber\\
y_1y_2y_3y_4y_6&=&0\nonumber\\
y_1y_2y_3y_5y_6&=&0\nonumber\\
y_1y_3y_4y_5y_6&=&0\nonumber\\
y_1y_2y_4y_5y_6&=&0\nonumber\\
y_2y_3y_4y_5y_6&=&0.\nonumber
\end{eqnarray}

We have ${\bf g}=(6,1,3,1,0,0)$, so $${\bf q}({\bf g})=(4,2,3,4).$$

We have the following table of results

\begin{center}
\begin{tabular}{|c|c|c|c|c|c|c|}
\hline\hline
$[{\bf w}]$ & ${\bf q}({\bf w})$& $c$ & $c^2$ & $c^3$ & $c^4$ & Select?\\
\hline\hline
$[5,5,1,1,0,0]$ & $(5,4,4,4)$ & $5\cdot 4^{-1}=3$ & $4\cdot 2^{-1}=2=3^2$ & $4\cdot 3^{-1}=6=3^3$ & $4\cdot 4^{-1}=1\neq 3^4$ & No\\
\hline
$[0,4,6,4,0,1]$ & $(1,1,6,5)$ & $1\cdot 4^{-1}=2$ & $1\cdot 2^{-1}=4=2^2$ & $6\cdot 3^{-1}=2\neq 2^3$ & & No\\
\hline
$[3,0,1,6,0,1]$ & $(4,2,3,4)$ & $4\cdot 4^{-1}=1$ & $2\cdot 2^{-1}=1=1^2$ & $3\cdot 3^{-1}=1=1^3$ &$4\cdot 4^{-1}=1=1^4$ & Yes\\
\hline
$[5,2,0,5,0,1]$ & $(6,1,4,1)$ & $6\cdot 4^{-1}=5$ & $1\cdot 2^{-1}=4=5^2$ & $4\cdot 3^{-1}=6=5^3$ &$1\cdot 4^{-1}=2=5^4$ & Yes\\
\hline
$[1,5,2,0,0,1]$ & $(2,4,6,3)$ & $2\cdot 4^{-1}=4$ & $4\cdot 2^{-1}=2=4^2$ & $6\cdot 3^{-1}=2\neq 4^3$ & & No\\
\hline
$[5,1,0,2,1,0]$ & $(2,4,6,3)$ & $2\cdot 4^{-1}=4$ & & $6\cdot 3^{-1}=2\neq 4^3$ & & No\\
\hline
$[4,0,4,6,1,0]$ & $(1,1,6,5)$ & $1\cdot 4^{-1}=2$ & & $6\cdot 3^{-1}=2\neq 2^3$ & & No\\
\hline
$[0,3,6,1,1,0]$ & $(4,2,3,4)$ & $4\cdot 4^{-1}=1$ & $2\cdot 2^{-1}=1=1^2$ & $3\cdot 3^{-1}=1=1^3$ &$4\cdot 4^{-1}=1=1^4$ & Yes\\
\hline
$[2,5,5,0,1,0]$ & $(6,1,4,1)$ & $6\cdot 4^{-1}=5$ & $1\cdot 2^{-1}=4=5^2$ & $4\cdot 3^{-1}=6=5^3$ &$1\cdot 4^{-1}=2=5^4$ & Yes\\
\hline
$[0,1,0,3,6,1]$ & $(4,2,3,4)$ & $4\cdot 4^{-1}=1$ & $2\cdot 2^{-1}=1=1^2$ & $3\cdot 3^{-1}=1=1^3$ &$4\cdot 4^{-1}=1=1^4$ & Yes\\
\hline
$[6,0,4,0,6,1]$ & $(3,2,4,4)$ & $3\cdot 4^{-1}=6$ & $2\cdot 2^{-1}=1=6^2$ & $4\cdot 3^{-1}=6=6^3$ &$4\cdot 4^{-1}=1=6^4$ & Yes\\
\hline
$[0,0,5,5,1,1]$ & $(5,4,4,4)$ & $5\cdot 4^{-1}=3$ & & & $4\cdot 4^{-1}=1\neq 3^4$ & No\\
\hline
$[3,3,0,0,1,1]$ & $(1,1,3,2)$ & $1\cdot 4^{-1}=2$ & $1\cdot 2^{-1}=4=2^2$ & $3\cdot 3^{-1}=1=2^3$ & $2\cdot 4^{-1}=4\neq 2^4$ & No\\
\hline
$[2,0,0,1,5,1]$ & $(2,4,6,3)$ & $2\cdot 4^{-1}=4$ & & $6\cdot 3^{-1}=2\neq 4^3$ & & No\\
\hline
$[0,6,3,0,3,1]$ & $(6,1,1,5)$ & $6\cdot 4^{-1}=5$ & $1\cdot 2^{-1}=4=5^2$ & $1\cdot 3^{-1}=5\neq 5^3$ & & No\\
\hline\hline
\end{tabular}
\end{center}

We got six selections. After multiplying by the corresponding $c^{-1}$, we get the six vectors which are the rows of the matrix:

$$W=\left(\begin{array}{cccccc}3&0&1&6&0&1\\1&6&0&1&0&3\\0&3&6&1&1&0\\6&1&1&0&3&0\\0&1&0&3&6&1\\1&0&3&0&1&6\end{array}\right).$$

Next we pick up and continue using the algorithm in Section \ref{McElieceReedSolomon}.

\medskip

\noindent STEP 3: Let us pick the third row of $W$ as our $${\bf w}({\bf x}_{i_0})=(0,3,6,1,1,0)=:{\bf w}.$$

\medskip

\noindent STEP 4: In Example \ref{method3}, we computed $\Delta_{i_0}$. Let's pick $\sigma=(15423)\in\Delta_{i_0}$. With $\tau=(654321)$ we calculate

$${\bf w}\cdot I_6(\sigma^{-1}\tau^{-1}\sigma)={\bf w}\cdot I_6((163425))=(1,1,0,6,3,0),$$ which is not in $W$.

\medskip

\noindent STEP 5: Let's pick $\sigma=(16523)\in\Delta_{i_0}$. Then

$${\bf w}\cdot I_6(\sigma^{-1}\tau^{-1}\sigma)={\bf w}\cdot I_6((135246))=(0,1,0,3,6,1),$$ and, with $(\tau^2)^{-1}=(135)(246)$,

$${\bf w}\cdot I_6(\sigma^{-1}(\tau^2)^{-1}\sigma)={\bf w}\cdot I_6((154)(263))=(1,6,0,1,0,3).$$ These vectors are the fifth, and respectively, the second rows of $W$.

\medskip

This is telling us that the permutation matrix $P$ is
$$P= I_6((16523)^{-1})=\left(\begin{array}{cccccc}0&0&1&0&0&0\\0&0&0&0&1&0\\0&1&0&0&0&0\\0&0&0&1&0&0\\0&0&0&0&0&1\\1&0&0&0&0&0\end{array}\right).$$

\medskip

$\bullet$ Decoding the codeword of minimum weight $(0,3,6,1,1,0)$, we get the first row of the matrix $S^{-1}$ to be $(4,6,1)$.

$\bullet$ Decoding the codeword of minimum weight $(0,1,0,3,6,1)$, we get the second row of the matrix $S^{-1}$ to be $(2,3,2)$.

$\bullet$ Decoding the codeword of minimum weight $(1,6,0,1,0,3)$, we get the third row of the matrix $S^{-1}$ to be $(3,3,0)$.

So $$S^{-1}= \left(\begin{array}{ccc}4&6&1\\2&3&2\\3&3&0\end{array}\right).$$
\end{exm}

\subsubsection{Finding the chosen primitive element.}\label{primitive} In this section we briefly discuss a method to discover the chosen primitive element $\alpha$ of $\mathbb K$, only if we have knowledge of the matrix $G_1$.

By Section \ref{Reed-Solomon} the first row of the $k\times n$ matrix $G_{\alpha}$ is the vector $${\bf g}_{\alpha}:=(g_0(\alpha),g_1(\alpha),\ldots,g_{d-2}(\alpha),\underbrace{g_{d-1}(\alpha)}_1,0,\ldots,0),$$ with $d=n-k+1$, where $g_i(\alpha)$ is the coefficient of $x^i$ in $g_{\alpha}(x):=(x-\alpha)(x-\alpha^2)\cdots(x-\alpha^{d-1})$, for $i=0,\ldots,d-1$, hence
$$g_i(\alpha)=(-1)^{(d-1)-i}\sum_{1\leq j_1<\cdots< j_{(d-1)-i}\leq d-1}\alpha^{j_1+\cdots+j_{(d-1)-i}}.$$

Withe these we construct the familiar vector $${\bf q}({\bf g}_{\alpha})=\left(\ldots,\sum_{0\leq i_1<\cdots<i_j\leq d-1}(g_{i_1}(\alpha)\cdots g_{i_j}(\alpha)),\ldots\right).$$

Suppose we determined $[{\bf w}]$, a projective codeword of minimum weight (equal to $d$). Suppose ${\bf q}({\bf w})=(A_1,\ldots,A_d)\in\mathbb K^d$. By solving the following system of equations in unknowns $X$ and $Y$, we determine if there is a scalar $c\neq 0$, and a primitive element $\alpha$ such that ${\bf w}$ is some permutation of the vector $c{\bf g}_{\alpha}$.

\begin{eqnarray}
YA_1&=&g_0(X)+\cdots+g_{d-1}(X)\nonumber\\
&\vdots&\nonumber\\
Y^jA_j&=&\sum_{0\leq i_1<\cdots<i_j\leq d-1}(g_{i_1}(X)\cdots g_{i_j}(X))\nonumber\\
&\vdots&\nonumber\\
Y^dA_d&=&g_0(X)\cdots g_{d-1}(X).\nonumber
\end{eqnarray} The solution, if it exists, will be $Y=c^{-1}$, and $X=\alpha$.

Because $g_{d-1}(X)=1$, we can rearrange the equations in the following way.

\begin{eqnarray}
g_0(X)+\cdots+g_{d-2}(X)&=&YA_1-1\nonumber\\
\sum_{0\leq i<j\leq d-2}(g_i(X)g_j(X))&=&Y^2A_2-YA_1+1\nonumber\\
&\vdots&\nonumber\\
\sum_{0\leq i_1<\cdots<i_j\leq d-2}(g_{i_1}(X)\cdots g_{i_j}(X))&=&Y^jA_j-Y^{j-1}A_{j-1}+\cdots+(-1)^{j-1}YA_1+(-1)^j\nonumber\\
&\vdots&\nonumber\\
g_0(X)\cdots g_{d-2}(X)&=&Y^{d-1}A_{d-1}-\cdots+(-1)^{d-2}YA_1+(-1)^{d-1}\nonumber\\
Y^dA_d&=&g_0(X)\cdots g_{d-2}(X).\nonumber
\end{eqnarray}

The last two equations tell us that $c^{-1}$ is a root in $\mathbb K$ of the polynomial $$U_{\bf w}(Y)=A_dY^d-A_{d-1}Y^{d-1}+\cdots+(-1)^{d-1}A_1Y+(-1)^d\in\mathbb K[Y].$$

For each such root we find a common solution in $\mathbb K$ of the first $d-2$ equations above, with $Y$ substituted by $c^{-1}$ (we can do this by computing the greatest common divisor of the corresponding $d-2$ polynomials, and factoring this common divisor completely). This common solution is a candidate for the primitive root of $\mathbb K$. As we will see in the example below, same projective codeword of minimum weight $[{\bf w}]$ can lead to different primitive roots.

\begin{exm} Let us consider the situation in Example \ref{attack_mincodewords}. We have

$$g_0(X)=-X^6, g_1(X)=X^3(1+X+X^2), g_2(X)=-X(1+X+X^2), g_3(X)=1.$$

$\bullet$ Let us consider the projective codeword of minimum weight, listed first in the table in that example: $[{\bf w}]=[5,5,1,1,0,0]$. We have ${\bf q}({\bf w})=(5,4,4,4)$, so $A_1=5, A_2=4, A_3=4, A_4=4$. The polynomial $$U_{\bf w}(Y)=4Y^4-4Y^3+4Y^2-5Y+1=(-3)(Y-1)^2(Y-3)^2,$$ has roots $c^{-1}=1,3$.

The greatest common divisor of the polynomials
\begin{eqnarray}
&&g_0(X)+g_1(X)+g_2(X)-5c^{-1}+1\nonumber\\
&&g_0(X)g_1(X)+g_0(X)g_2(X)+g_1(X)g_2(X)-4(c^{-1})^2+5c^{-1}-1\nonumber\\
&&g_0(X)g_1(X)g_2(X)-4(c^{-1})^3+4(c^{-1})^2-5c^{-1}+1\nonumber
\end{eqnarray} is the constant polynomial 1 for both values of $c^{-1}$; so no common solution.

$\bullet$ Let us consider the projective codeword of minimum weight, listed third in the table in that example: $[{\bf w}]=[3,0,1,6,0,1]$. We have ${\bf q}({\bf w})=(4,2,3,4)$, so $A_1=4, A_2=2, A_3=3, A_4=4$. The polynomial $$U_{\bf w}(Y)=4Y^4-3Y^3+2Y^2-4Y+1=(-3)(Y-1)^2(Y+2)(Y+1),$$ has roots $c^{-1}=1,-2,-1$.

The greatest common divisor of the polynomials
\begin{eqnarray}
&&g_0(X)+g_1(X)+g_2(X)-4c^{-1}+1\nonumber\\
&&g_0(X)g_1(X)+g_0(X)g_2(X)+g_1(X)g_2(X)-2(c^{-1})^2+4c^{-1}-1\nonumber\\
&&g_0(X)g_1(X)g_2(X)-3(c^{-1})^3+2(c^{-1})^2-4c^{-1}+1\nonumber
\end{eqnarray} is

\begin{enumerate}
  \item $Y-3$, if $c^{-1}=1$,
  \item the constant polynomials $1$ if $c^{-1}=-2$,
  \item $Y+2$, if $c^{-1}=-1$.
\end{enumerate}

In the first case, $\alpha=3$, and $c=1$, which is exactly what we obtained in the table.

In the second case we have no common solution.

In the third case, $\alpha=5$, and $c=6$. We have ${\bf g}_5=(6,4,6,1,0,0)$, and $c{\bf g}_5=(1,3,1,6,0,0)$, which is indeed a permutation of ${\bf w}=(3,0,1,6,0,1)$.
\end{exm}

%%%%%%%%%%%%%%%%%%%%%%%%%%%%%%%%%%%%%%%%%%%%%%%%%%%%%%%%
% Back to single space
\renewcommand{\baselinestretch}{1.0}
\small\normalsize % to get previous line to take
%%%%%%%%%%%%%%%%%%%%%%%%%%%%%%%%%%%%%%%%%%%%%%%%%%%%%%%%

\bibliographystyle{amsalpha}

\begin{thebibliography}{10}

\bibitem{clo2} D. Cox, J. Little, D. O'Shea, {\em Ideals, Varieties, and Algorithms: An Introduction to Computational Algebraic Geometry and Commutative Algebra}, Third Edition, Springer, New York, 2010.

\bibitem{dp} M. De Boer, R. Pellikaan, {\em Grobner Bases for Codes}, in Some Tapas of Computer Algebra, Springer, 1999, pp. 237--259.

\bibitem{Ga} J. A. Gallian, {\em Contemporary Abstract Algebra}, 7th Edition, Brooks / Cole Cengage Learning, 2010.

\bibitem{GaTo} M. Garrousian, S. Tohaneanu, {\em Minimum distance of linear codes and the $\alpha$-invariant}, Adv. Appl. Math., 71 (2015), pp. 190--207.

\bibitem{GrSt} D. Grayson, M. Stillman, {\em Macaulay2, a software system for research in algebraic geometry},
          Available at http://www.math.uiuc.edu/Macaulay2/.

\bibitem{hp1} W. Huffman, V. Pless, editors,
            {\em Handbook of Coding Theory Volumes 1 \& 2}, Elsevier Science B.V., Netherlands, 1998.

\bibitem{Jef} D.J. Jeffrey, {\em LU factoring of non-invertible matrices}, ACM Communications in Computer Algebra, 44(2010), pp. 1--8.

\bibitem{t} S. Tohaneanu, {\em On the De Boer-Pellikaan method for computing minimum distance}, J. Symbolic Computation, 45 (2010), pp. 965--974.

\bibitem{ToVa} S. Tohaneanu, A. Van Tuyl, {\em Bounding invariants of fat points using a Coding Theory construction}, J. Pure Appl. Algebra, 217(2013), pp. 269--279.

\bibitem{TrWa} W. Trappe, L. C. Washington, {\em Introduction to Cryptography with Coding Theory}, Second Edition, Pearson Prentice Hall, New Jersey, 2006.

\end{thebibliography}

\end{document}